\theoremstyle{plain}
\newtheorem{thm}{Theorem}[section]
\newtheorem{cor}[thm]{Corollary}
\newtheorem{lem}[thm]{Lemma}
\newtheorem{prop}[thm]{Proposition}
\newtheorem{conj}[thm]{Conjecture}
\newtheorem*{thm*}{Theorem}
\newtheorem*{lem*}{Lemma}
\theoremstyle{definition}
\newtheorem{df}[thm]{Definition}
\newtheorem{ex}[thm]{Example}
\newtheorem*{df*}{Definition}
\newtheorem*{ex*}{Example}
\newtheorem*{rem*}{Remark}
\theoremstyle{remark}
\newcommand\T{{\mathbf T}}
\newcommand\Z{{\mathbb Z}}
\newcommand\cT{{\mathcal T}}
\title{Csikv\'{a}ri's poset and Tutte polynomial}
\author{Changxin Ding}
\email{cding66@gatech.edu}
\address{School of Mathematics, Georgia Institute of Technology \\ Atlanta, Georgia 30332-0160, USA}
\begin{document}

\begin{abstract}
Csikv\'{a}ri constructed a poset on trees to prove that several graph functions attain extreme values at the star and the path among the trees on a fixed number of vertices. Reiner and Smith proved that the Tutte polynomials $\T(1,y)$ of cones over trees, which are the graphs obtained by attaching a cone vertex to a tree, have the described extreme behavior. They further conjectured that the result can be strengthened in terms of Csikv\'{a}ri's poset. We solve this conjecture affirmatively.


\end{abstract}
\maketitle

\section{Introduction}\label{Intro}

This paper aims to prove a conjecture of Victor Reiner and Dorian Smith. We first introduce some background. 

Fix a positive integer $n\geq 3$ and consider the set $\cT_n$ of all trees on $n$ vertices. In $\cT_n$, two extreme elements are the tree $\text{Path}_n$ with exactly $2$ leaves and the tree $\text{Star}_n$ with exactly $n-1$ leaves. Many graph functions $F(G)$ attain extreme values at $\text{Star}_n$ and $\text{Path}_n$ among trees in $\cT_n$. For example, Lov\'{a}sz and Pelik\'{a}n \cite{LP} proved that the star has the largest spectral radius and the path has the smallest spectral radius among all trees in $\cT_n$ (in short, we say that the function $F(G)$ can be the spectral radius of $G$); Zhou and Gutman \cite{GZ} proved that $F(G)$ can be the coefficients of the characteristic polynomial of the Laplacian matrix of $G$; P\'{e}ter Csikv\'{a}ri \cite{Csikvari1} proved that $F(G)$ can the number of closed walks of a fixed length $l$ in $G$. For more examples, see \cite{Csikvari2}.


Among these works, Csikv\'{a}ri's is of most interest to us. Csikv\'{a}ri's main tool is an operation on $\cT_n$ called the \emph{generalized tree shift}, which makes $\cT_n$ a partially ordered set. We call it the \emph{Csikv\'{a}ri poset}. One basic feature of the Csikv\'{a}ri poset is that the tree $\text{Star}_n$ is the unique maximal element and the tree $\text{Path}_n$ is the unique minimal element \cite[Corollary 2.5]{Csikvari1}. 

For the definition of Csikv\'{a}ri poset, see Section~\ref{Pre}. For readers' convenience, we quote the example of $\cT_7$ from \cite{RS} here; see Figure~\ref{Hasse}. For the example of $\cT_6$ , see \cite[Figure 2]{Csikvari2}.

The generalized tree shift indeed generalizes many transformations for trees found in the literature; see \cite[Section 10]{Csikvari2} for a detailed discussion. Csikv\'{a}ri's tree shift was inspired by a graph transformation defined by Kelmans in \cite{Kelmans}. In \cite{BT}, the tree shift is called the \emph{KC-transformation} in honor of Kelmans and Csikv\'{a}ri. Csikv\'{a}ri's tree shift was used to prove that several graph functions $F(G)$ have the property described in the second paragraph; see \cite{Csikvari1, BT,Csikvari2}. 

\begin{figure}[ht]\label{Hasse}
            \centering
            \includegraphics[scale=0.15]{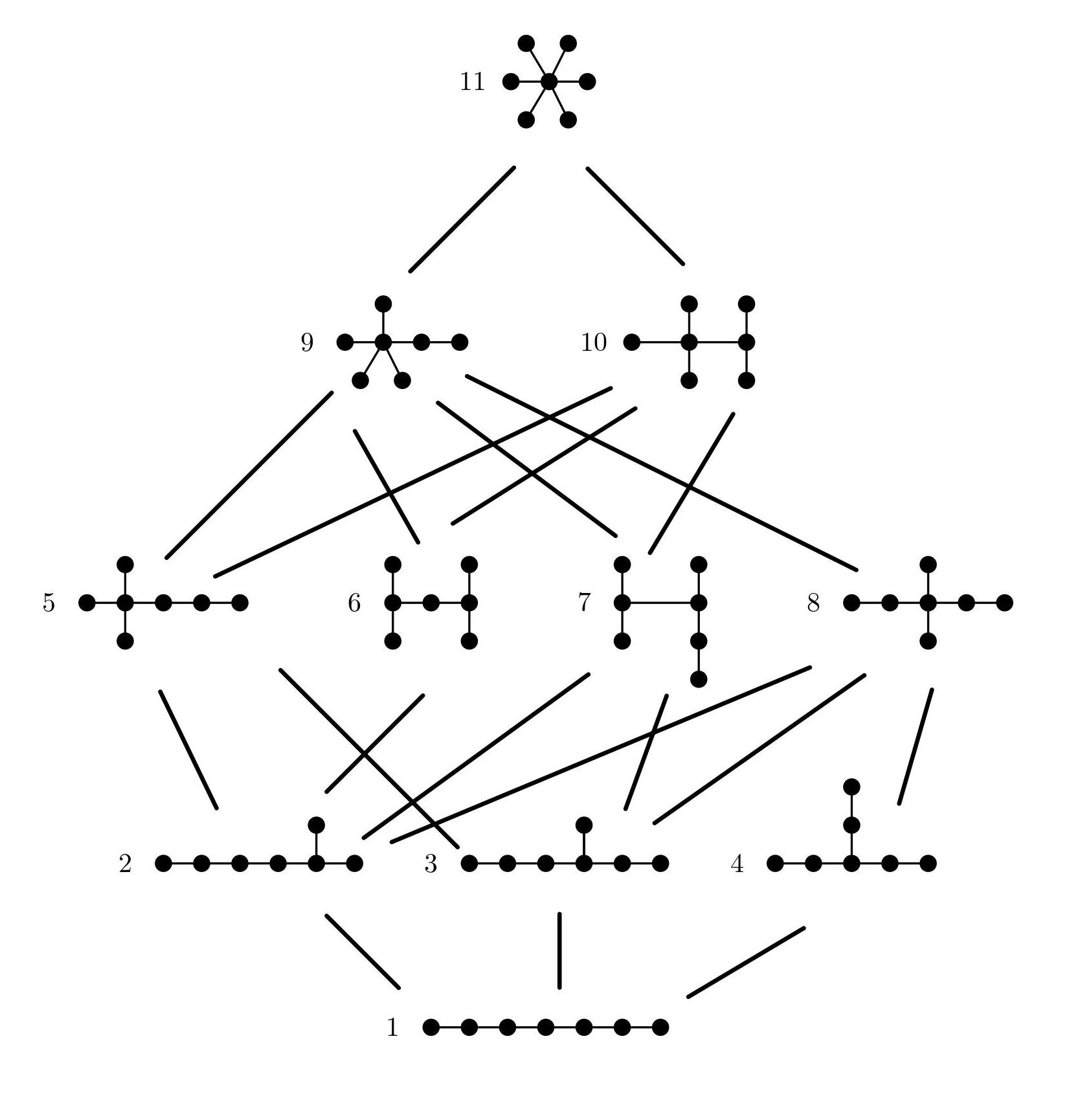}
        \caption{The Hasse diagram of the Csikv\'{a}ri poset on trees with $7$ vertices \cite{RS}.}
\end{figure}

In \cite{RS}, Victor Reiner and Dorian Smith studied the sandpile groups of graphs ${\text{Cone}(T)}$ obtained by attaching a cone vertex to a tree $T$. In particular, they studied the function
\[f(T):=\T_{\text{Cone}(T)}(1,y),\]
where $\T_G(x,y)\in\Z[x,y]$ denotes the Tutte polynomial of a graph $G$. 

In general, the coefficients of the Tutte polynomial $\T_G(1,y)$ encode some information from $G$ including external activities, the number of recurrent sandpile configurations (also known as critical configurations) at a given level, and the number of reduced divisors (also known as $G$-parking functions or superstable configurations) at a given level; see \cite{Merino2, Bernardi, BakerShokrieh}. Via duality, $\T_G(1,y)$ can be viewed as the $h$-vector of the cographic matroid $M^*(G)$ \cite{Merino3}.

Going back to Reiner and Smith's work, they proved the following result. 
\begin{thm}\label{old}\cite[Theorem 1.4]{RS} For a tree $T\in\cT_n$, the inequalities
\[f(\text{Star}_n)\leq f(T)\leq f(\text{Path}_n)\]
hold coefficientwise. 
\end{thm}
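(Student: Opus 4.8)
The plan is to derive Theorem~\ref{old} from the single-step, poset-theoretic inequality that the Reiner--Smith conjecture predicts, namely: \emph{if $T'$ is obtained from $T$ by one generalized tree shift that moves upward in the Csikv\'ari poset ($T \lessdot T'$), then $f(T') \le f(T)$ coefficientwise.} Granting this, Theorem~\ref{old} is immediate. The Csikv\'ari poset is finite with $\text{Star}_n$ as its unique maximum and $\text{Path}_n$ as its unique minimum \cite[Corollary 2.5]{Csikvari1}, so from any $T \in \cT_n$ one can ascend to $\text{Star}_n$ through a chain of upward covers and descend to $\text{Path}_n$ through a chain of downward covers; applying the one-step inequality along these chains and using transitivity of the coefficientwise order gives $f(\text{Star}_n) \le f(T) \le f(\text{Path}_n)$. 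Thus the whole problem reduces to one cover relation.

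To control the coefficients of $f(T)$, I would pass to the abelian sandpile model on $\text{Cone}(T)$ with the cone vertex $q$ as sink. The cycle rank of $\text{Cone}(T)$ is $|E|-|V|+1 = (2n-1)-(n+1)+1 = n-1$, and by Merino's theorem \cite{Merino2} one has $\T_{\text{Cone}(T)}(1,y) = \sum_c y^{\mathrm{level}(c)}$, the sum running over recurrent configurations, with $\mathrm{level}(c) = \deg(q) - |E| + \sum_{v} c(v) = \sum_{v \in V(T)} c(v) - (n-1)$. Since $q$ is adjacent to every vertex of $T$, Dhar's burning criterion specializes to a purely tree-intrinsic test: a stable configuration $c\colon V(T)\to\Z_{\ge 0}$ (so $0 \le c(v) \le \deg_T(v)$) is recurrent if and only if every nonempty $S \subseteq V(T)$ contains a vertex $v$ with $c(v) \ge \deg_S(v)$, where $\deg_S(v)$ is the number of $T$-neighbors of $v$ lying in $S$. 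Hence $[y^j]\,f(T)$ is exactly the number of such recurrent configurations with $\sum_v c(v) = j+(n-1)$, and it suffices to build, for each upward shift $T \lessdot T'$, a level-preserving injection from the recurrent configurations of $\text{Cone}(T')$ into those of $\text{Cone}(T)$.

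The locality of the generalized tree shift is what should make such an injection accessible: the shift alters $T$ only in a neighborhood of a chosen path, relocating a branch between the two ends of that path, so outside this neighborhood $T$ and $T'$ are canonically identified and a configuration is pinned down by its restriction to the untouched part together with its values near the path (see Section~\ref{Pre}). The map I would construct fixes the configuration on the untouched part and adjusts only the values in the modified region, arranged to keep $\sum_v c(v)$ constant; the two path-endpoints have their tree-degrees change by one under the shift (one gains the relocated branch, the other loses it), and this single unit of discrepancy is exactly what the local adjustment must absorb. Matching the admissible local patterns for $T'$ with those for $T$, sorted by local chip total, should produce the required injection and hence $f(T')\le f(T)$.

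The crux, and the step I expect to be hardest, is verifying that this map actually lands among recurrent configurations of $\text{Cone}(T)$ and is injective. Recurrence is quantified over all subsets $S$, and the shift changes which subsets straddle the modified region, so one must show that no forbidden subconfiguration is created for $T$ and that injectivity is not spoiled when several $T'$-configurations compete for the same image. I anticipate this collapsing to a finite case analysis governed by the degree of the relocated branch's root and the chip values along the path. As a hedge, I would develop in parallel the leaf deletion--contraction recursion $f(T) = f(T-\ell) + \T_{H}(1,y)$, where $\ell$ is a leaf of $T$ and $H = \text{Cone}(T-\ell)$ carries a doubled cone edge at the former neighbor of $\ell$; this recasts the cover inequality as a monotonicity statement in the enlarged family of cones with cone-edge multiplicities, where induction on $n$ may be more transparent than a direct injection.
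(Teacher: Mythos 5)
Your first paragraph is fine and matches the paper's own reduction: the paper likewise derives Theorem~\ref{old} from the cover-relation inequality via Proposition~\ref{maxmin} (star is the unique maximal element, path the unique minimal one, so chains of covers connect any $T$ to both extremes). Your sandpile translation is also correctly set up: the cycle rank computation, Merino's identity $f(T)=\sum_c y^{\mathrm{level}(c)}$ with $\mathrm{level}(c)=\sum_v c(v)-(n-1)$, and the specialization of Dhar's burning criterion to ``every nonempty $S\subseteq V(T)$ contains $v$ with $c(v)\ge \deg_S(v)$'' are all accurate. The problem is that everything after this is a plan, not a proof: the level-preserving injection is never constructed, and you yourself identify recurrence-preservation and injectivity as the unresolved crux. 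Since the entire mathematical content of the theorem is the one-step inequality $f(T')\le f(T)$, the proposal has a genuine gap exactly where the work lies.

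Moreover, there is concrete reason to doubt the specific design you sketch (fix the configuration off the modified region, absorb one unit of degree discrepancy at the two path endpoints). The paper proves the exact factorization $f(T)-f(T')=y\cdot g_{v_1}(H_1)\cdot g_{v_k}(H_2)\cdot g_{v_1}(P_k)$, where $g_v(G)=\T_{\mathrm{Cone}(G)\setminus\varepsilon(v)}(1,y)$; the surplus of recurrent configurations is thus enumerated by \emph{triples} of data attached to $H_1$, $H_2$, and the path simultaneously, so the obstruction is distributed across all three pieces rather than concentrated at $v_1$ and $v_k$, and a purely endpoint-local adjustment cannot be expected to be both well-defined and injective (recurrence is tested on subsets $S$ straddling all three pieces). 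The paper avoids injections entirely: it verifies the hypotheses of Csikv\'ari's General Lemma (Lemma~\ref{general lemma}) with $c_1=1$, $c_2=0$, $c_3=-y$, the key input being the identity $f(G_1:G_2)=f(G_1)f(G_2)-y\,g_v(G_1)g_v(G_2)$ of Lemma~\ref{keyformula}, proved by deletion--contraction induction on the edges of $G_1$. Your ``hedge'' recursion $f(T)=f(T-\ell)+\T_H(1,y)$ is correct (the leaf edge lies in a triangle with the apex, so deletion leaves a cone plus a bridge) and is essentially the first move of that induction; developing it into the multiplicative identity above, rather than into a configuration-counting argument, is what closes the gap.
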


For readers' convenience, we quote the values $f(T)$ for all the trees $T\in\cT_7$ from \cite{RS}; see Table~\ref{table}. 
\begin{table}
\begin{center}
\begin{tabular}{ |c|c|c| } 
 \hline
 $T$&  $f(T)$ \\
  \hline\hline
 1 &  $y^6+7y^5+26y^4+63y^3+104y^2+122y+64$ \\
  \hline
  2 &  $y^6+7y^5+25y^4+59y^3+96y^2+104y+64$ \\
  \hline
 3 &  $y^6+7y^5+25y^4+58y^3+94y^2+104y+64$ \\
  \hline
 4 & $y^6+7y^5+25y^4+57y^3+92y^2+104y+64$ \\
  \hline
 5 &  $y^6+7y^5+24y^4+53y^3+83y^2+92y+64$ \\
  \hline
 6 &  $y^6+7y^5+24y^4+55y^3+89y^2+96y+64$ \\
  \hline
 7 &  $y^6+7y^5+24y^4+54y^3+86y^2+96y+64$ \\
  \hline
 8 &  $y^6+7y^5+24y^4+52y^3+80y^2+92y+64$ \\
  \hline
 9 &  $y^6+7y^5+23y^4+47y^3+68y^2+78y+64$ \\
  \hline
 10 &  $y^6+7y^5+23y^4+49y^3+76y^2+84y+64$ \\
  \hline
 11 &  $y^6+7y^5+22y^4+42y^3+57y^2+63y+64$ \\ 
 \hline
\end{tabular}
\caption{The values $f(T)$ for all the trees $T\in\cT_7$. See Figure~\ref{Hasse} for the labels of the trees.}
\end{center}
\label{table}
\end{table}
Victor Reiner and Dorian Smith conjectured and we will prove the following result.

\begin{conj}\label{conjecture}\cite[Conjecture 7.1]{RS} Let $T$ and $T'$ be two trees. 
If $T<T'$ in the Csikv\'{a}ri poset, then $f(T') \leq f(T)$ coefficientwise in $\Z[y]$.
\end{conj}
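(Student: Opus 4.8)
The plan is to convert $f(T)$ into a manifestly nonnegative sum over spanning forests of $T$, and then to compare these sums across a single generalized tree shift.

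\textbf{Step 1: a forest formula for $f$.} I would compute $\T_{\text{Cone}(T)}(1,y)=\sum_{\tau}y^{\mathrm{ext}(\tau)}$ over spanning trees $\tau$ of $\text{Cone}(T)$ using the edge order in which every \emph{cone edge} (joining the apex to a vertex of $T$) precedes every \emph{tree edge}. A spanning tree of $\text{Cone}(T)$ is the same datum as a pair $(F,s)$ where $F\subseteq E(T)$ is a spanning forest and $s$ chooses one apex-neighbor in each component of $(V(T),F)$. With this order no tree edge is ever externally active (its fundamental cycle runs through the apex and hence contains two smaller cone edges), while a cone edge $cv$ is active exactly when $v$ precedes the chosen apex-neighbor of its component. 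Summing $y^{\mathrm{ext}}$ over the choices $s$ collapses each component $K$ to a factor $[\,|K|\,]_y=1+y+\dots+y^{|K|-1}$, giving the key identity
\[
f(T)=\sum_{F\subseteq E(T)}\ \prod_{K\text{ a component of }(V(T),F)}[\,|K|\,]_y .
\]
This already reproduces the entries of Table~\ref{table} (for $\text{Star}_n$ it is $\sum_d\binom{n-1}{d}[d+1]_y$), and it recasts the conjecture as a coefficientwise inequality between two forest sums.

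\textbf{Step 2: reduction to one shift.} Since $T<T'$ is witnessed by a chain of generalized tree shifts and coefficientwise $\le$ is transitive, it suffices to treat a single shift $T\to T'$. Writing the path between the two distinguished vertices as $x=v_0,\dots,v_k=y$, the shift (in its up-direction) detaches the off-path branches hanging at $y$ and reattaches them at the \emph{more central} vertex $x$, turning $y$ into a leaf; all other edges agree. Only the \emph{connector} edges differ ($yr_j$ in $T$ versus $xr_j$ in $T'$, where the $r_j$ are the roots of the moved branches), so I would index spanning forests of both trees by the shared edges together with the set $J$ of chosen connectors.

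\textbf{Step 3: term comparison.} For fixed shared edges and fixed $J$, the moved branches are merged into the component of $y$ (for $T$) or of $x$ (for $T'$), and everything else is identical. Writing $a,b$ for the sizes of the $y$- and $x$-components built from the shared edges and $m$ for the total moved size, the two contributions differ by $[a+m]_y[b]_y-[a]_y[b+m]_y$, and the elementary identity
\[
[a+m]_y[b]_y-[a]_y[b+m]_y=[m]_y\big([b]_y-[a]_y\big)
\]
makes everything factor. Summing out the branch structure (which contributes a common coefficientwise-nonnegative factor $\Theta$, independent of the shared core edges) reduces the whole difference to
\[
f(T)-f(T')=\Theta\cdot\!\!\sum_{\substack{F\text{ forest of the core}\\ x\not\sim y}}\!\!\big([\,|K_x|\,]_y-[\,|K_y|\,]_y\big)\prod_{K\ne K_x,K_y}[\,|K|\,]_y=:\Theta\cdot\Xi ,
\]
where the \emph{core} is $T$ with the moved branches deleted (so $y$ is a leaf) and $K_x,K_y$ denote the components of $x,y$. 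Everything now rests on the \emph{core inequality} $\Xi\ge0$: the component of $x$ must coefficientwise dominate that of $y$.

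\textbf{Step 4 (the main obstacle): the core inequality.} Rewriting $\Xi=g_y-g_x$ with $g_\bullet=\sum_{F:\,x\not\sim y}\prod_{K\ne K_\bullet}[\,|K|\,]_y$, I note that expanding each $[\,|K|\,]_y$ as a choice of root gives a weight-preserving injection from the $g_x$-model into the $g_y$-model \emph{on every forest with $|K_x|\ge|K_y|$} (re-root $K_x$ at the vertex of the same rank that rooted $K_y$). The genuine difficulty is the forests with $|K_y|>|K_x|$, which contribute negatively and must be absorbed by an injection that alters $F$ itself, shifting path edges so as to grow the $x$-side at the expense of the $y$-side. This is precisely where the hypothesis that $x$ is the more central endpoint (the defining feature of the up-direction of the shift) has to enter, and I expect this centrality-driven surgery along the path to be the crux. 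I would establish it either by a direct involution pairing $y$-heavy forests with $x$-heavy ones, or by induction on the core obtained by peeling the leaf $y$, strengthening the statement to an arbitrary pair $(x,u)$ subject to the appropriate centrality condition so that the induction closes.
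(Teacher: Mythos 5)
Your Steps 1--3 are correct, and they are genuinely different in flavor from the paper: the activity computation behind the forest formula $f(T)=\sum_{F}\prod_{K}[\,|K|\,]_y$ is valid (the apex-first order works because any edge of a tree outside a subforest joins two distinct components, so its fundamental cycle passes through the apex), and the identity $[a+m]_y[b]_y-[a]_y[b+m]_y=[m]_y([b]_y-[a]_y)$ does produce the factorization $f(T)-f(T')=\Theta\cdot\Xi$; in the notation of Definition~\ref{treeshift}, your $\Theta$ is exactly $g_{v_k}(H_2)$. But the argument stops precisely at the crux: Step 4 is a plan, not a proof, and the plan leans on a hypothesis that does not exist. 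The generalized tree shift has no ``up-direction'' favoring a more central endpoint: $T'=(H_1:H_2):P_k$ is symmetric in $H_1$ and $H_2$, so moving the branches from either end of the path to the other end produces the same tree up to isomorphism, and $f(T')\le f(T)$ holds with no centrality assumption at all. Centrality therefore cannot be the lever that absorbs the $y$-heavy forests, and no ``centrality-driven surgery'' can be the missing ingredient.

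Worse, the strengthening you propose for the induction (``an arbitrary pair $(x,u)$ subject to the appropriate centrality condition'') is false in the natural generality: if $y$ is the center of a star with leaves $x,w_1,w_2$, the forests avoiding the edge $xy$ give $\Xi=0-y-y-(y+y^2)=-3y-y^2<0$, even though $x$ and $y$ here look as symmetric in centrality terms as one could ask for after peeling. What actually makes $\Xi\ge 0$ true is the very special shape of the core --- $y$ is the tip of a pendant path of degree-two vertices attached to $H_1$ at $x$ --- and in fact $\Xi=y\,g_{v_1}(H_1)\,g_{v_1}(P_k)$, which is exactly the content of Proposition~\ref{mainresult}; so proving your core inequality is not a lemma on the way to the theorem, it \emph{is} the theorem, and Step 4 as written has made no progress on it. There is, however, a clean repair inside your own framework: your Step 1 expansion, applied at a $1$-sum, together with the elementary identity $[a]_y[b]_y-[a+b-1]_y=y\,[a-1]_y[b-1]_y$ and the companion formula $g_v(G)=\sum_{F}[\,|K_v|-1\,]_y\prod_{K\ne K_v}[\,|K|\,]_y$ (same activity computation, with the cone edge at $v$ deleted), immediately yields $f(G_1:G_2)=f(G_1)f(G_2)-y\,g_v(G_1)g_v(G_2)$ for trees $G_1,G_2$ --- this is the paper's Lemma~\ref{keyformula}, which the paper proves instead by a deletion--contraction induction. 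Feeding it into Csikv\'ari's General Lemma (Lemma~\ref{general lemma}, whose footnote notes that condition (1) is needed only for trees) with $c_1=1$, $c_2=0$, $c_3=-y$ finishes the proof and replaces your stuck Step 4 entirely.
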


Note that the conjecture implies Theorem~\ref{old}. Since our proof does not rely on Theorem~\ref{old} (or its proof), we obtain a new proof of Theorem~\ref{old}. Comparing the proof in \cite{RS} and ours, one can notice that they deal with many inequalities while we mostly deal with equalities. 

Remarkably, our proof makes use of what Csikv\'{a}ri called ``General Lemma'' (\cite[Theorem 5.1]{Csikvari2}). In particular, when $T'$ covers $T$ in the Csikv\'{a}ri poset, we can factor $f(T)-f(T')$ into three polynomials with no negative coefficients (Proposition~\ref{mainresult}). Thus our result can be viewed as another successful application of Csikv\'{a}ri's method in \cite{Csikvari2}.

Our paper is arranged as follows. In Section~\ref{Pre}, we give the necessary definitions and notations. In Section~\ref{proof}, we present the proof of Conjecture~\ref{conjecture}.

\section{Preliminaries}\label{Pre}

\subsection{Tutte polynomial and Cone of Tree}
By a graph, we mean a finite undirected graph possibly with parallel edges and loops. Recall that the Tutte polynomial $\T_G(x,y)$ of a graph $G$ can be defined recursively by the following contraction-deletion relation.

\begin{equation*}\label{Tutte-recursive-definition}
\T_G(x,y) = 
\begin{cases}
\T_{G\setminus e}(x,y)+ \T_{G/e}(x,y),
&\text{if  }e \text{ is neither a loop nor a bridge}, \\
y \cdot \T_{G \setminus e}(x,y),
&\text{if  e is a loop,}\\
x \cdot \T_{G/e} (x,y),
&\text{if  e is a bridge,}\\
1, &\text{if }G\text{ has no edges,}
\end{cases}
\end{equation*}
where $G\setminus e$ (resp. $G/e$) is the graph obtained from $G$ by deleting (resp. contracting) the edge $e$. It follows that the coefficients of Tutte polynomials are non-negative integers. 

By the contraction-deletion relation, it is easy to see that the trees in $\cT_n$ cannot be distinguished by their Tutte polynomials. Following \cite{RS}, we may consider the Tutte polynomials of the cones over trees to help with this issue.

\begin{df}
Let $G$ be a graph. \begin{enumerate}
    \item The \emph{cone} over the graph $G$ is the graph obtained from $G$ by adding an extra vertex $r$ and then connecting $r$ to each vertex of $G$, denoted by $\text{Cone}(G)$. 
    \item Define the function
\[f(G):=\T_{\text{Cone}(G)}(1,y)\in\Z[y].\]
\end{enumerate}  
\end{df}


\begin{df}
Throughout our paper, for two polynomials $f_1, f_2\in\Z[y]$, the inequality $f_1\leq f_2$ means that the coefficients of $f_2-f_1$ are all non-negative. 
\end{df}
In Section~\ref{Intro}, we have seen that the function $f(G)$ attains extreme values at $\text{Star}_n$ and $\text{Path}_n$ when taking values in $\cT_n$ (Theorem~\ref{old}). 



\subsection{Csikv\'{a}ri poset}
We first introduce a useful notation, which is also adopted in \cite{Csikvari2}. 

\begin{df}
For $i=1,2$, let $G_i$ be a graph with a distinguished vertex $v_i$. Let $G_1:G_2$ be the graph obtained from $G_1$ and $G_2$ by identifying the vertices $v_1$ and $v_2$.    
\end{df}
This operation depends on the vertices we choose, but we omit this in the notation for simplicity. It should be clear from the context what $v_1$ and $v_2$ are. Sometimes we also use the same label $v$ for the two distinguished vertices in $G_1$ and $G_2$. The operation is also known as a $1$-sum of $G_1$ and $G_2$ as a special case of clique-sums. 

The following operation on $\cT_n$ plays a central role in Csikv\'{a}ri's theory \cite{Csikvari1, Csikvari2} and our paper. 

\begin{df}\label{treeshift}
Let $P_k$ be a path in a tree $T$ with the vertex sequence $v_1,v_2,\cdots, v_k(k\geq 2)$ such that all the interior vertices $v_2, \cdots, v_{k-1}$ have degree two in $T$. (When $k=2$, there is no interior vertex.) By removing all the edges and interior vertices in the path $P_k$ from $T$, we obtain a tree $H_1$ with the distinguished vertex $v_1$ and a tree $H_2$ with the distinguished vertex $v_k$. Then the \emph{generalized tree shift} (with respect to $P_k$) transforms the tree $T$ into the tree $T':=(H_1:H_2):P_k$, where we identify the vertex $v_k$ in $H_1:H_2$ with the vertex $v_k$ in $P_k$. (See Figure~\ref{figureshift}.)
\end{df}

\begin{figure}[ht]
            \centering
            \includegraphics[scale=0.2]{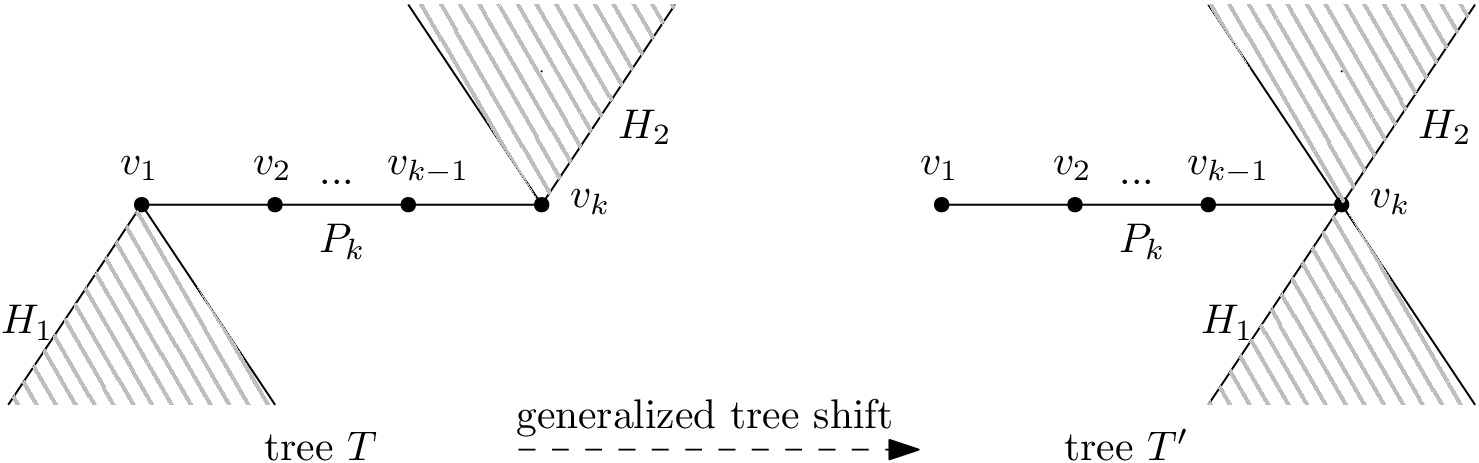}
        \caption{A generalized tree shift transforms a tree $T$ to anther tree $T'$ (Definition~\ref{treeshift}).}
            \label{figureshift}
\end{figure}

Observe that the generalized tree shift increases the number of leaves by $1$ unless $T$ and $T'$ are isomorphic graphs (when $v_1$ or $v_k$ is a leaf of $T$). Hence we may define the following poset. 
\begin{df}
For $T, T'\in\cT_n$, we denote $T\leq T'$ if $T'$ can be obtained from $T$ by some generalized tree shifts. This gives a partial order on $\cT_n$. We call it the \emph{Csikv\'{a}ri poset} on $\cT_n$. (See Figure~\ref{Hasse}.)
\end{df}

If a tree has two vertices that are not leaves, then one can increase the number of leaves by applying a generalized tree shift to the tree. This implies that the only maximal element of the Csikv\'{a}ri poset is the star. It is less trivial that the only minimal element is the path. 

\begin{prop}\label{maxmin}\cite[Corollary 2.5]{Csikvari1} In the Csikv\'{a}ri poset $(\cT_n, \leq)$, the tree $\text{Star}_n$ is the unique maximal element and the tree $\text{Path}_n$ is the unique minimal element. 
\end{prop}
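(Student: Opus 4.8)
The plan is to prove Proposition~\ref{maxmin} by analyzing what kinds of trees can be maximal or minimal under the generalized tree shift, using the observation already recorded in the excerpt: a generalized tree shift strictly increases the number of leaves unless the endpoint $v_1$ or $v_k$ of the chosen path is itself a leaf of $T$ (in which case $T'\cong T$). I would treat the maximal and minimal cases separately, since the maximal case is genuinely easy and the minimal case is where the real work lies.

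For the maximal element, I would argue directly. Suppose $T\in\cT_n$ is not the star; then $T$ has at least two non-leaf vertices. I would pick two non-leaf vertices $u,w$ that are as far apart as possible, take $P_k$ to be the unique path between them in $T$, and check that its interior vertices all have degree two (this follows from choosing $u,w$ to be a diametral pair of non-leaves, or alternatively I would choose $u,w$ to be adjacent non-leaves so that $P_2$ has no interior vertices at all, which is simpler). Since neither endpoint is a leaf, the tree shift strictly increases the number of leaves, so $T<T'$ for some $T'$, and hence $T$ is not maximal. This shows the star is the unique maximal element.

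The minimal case is the main obstacle and requires more than a one-line observation. The goal is to show that any tree $T$ that is not a path can be \emph{decreased}, i.e.\ that there exists a tree $T''$ with $T''<T$ obtainable from $T''$ by a single tree shift landing on $T$. Equivalently, I want to realize $T$ as the output $T'=(H_1:H_2):P_k$ of a tree shift applied to some genuinely different $T''$ with fewer leaves. The heart of the matter is to invert the tree-shift operation: given $T$ with at least three leaves, I must locate a suitable path and a suitable branch that can be ``unshifted.'' The natural strategy is to take a longest path (a diameter) $v_1,\dots,v_k$ in $T$; since $T$ is not a path it has a vertex of degree $\ge 3$, and I would locate a branching vertex $v_j$ on this diameter off of which hangs a subtree. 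I would then reverse-engineer the decomposition $T=(H_1:H_2):P$ witnessing that $T$ is the image of a tree shift of a tree with strictly fewer leaves, taking care that the interior-vertices-have-degree-two condition in Definition~\ref{treeshift} is met by the path I use for the inverse operation.

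Because inverting the operation cleanly is delicate, the honest and economical route is to cite the structure already granted: Proposition~\ref{maxmin} is quoted verbatim as \cite[Corollary 2.5]{Csikvari1}, so for the purposes of this paper I would simply invoke that reference for the minimal statement and give the short self-contained argument above only for the maximal statement, remarking that the uniqueness of the minimal element follows because the tree shift strictly decreases the number of leaves in reverse and the path is the unique tree with exactly two leaves. The key technical point to verify in a full proof would be the existence of a valid unshift for every non-path tree; I expect that step---checking that a longest-path-based decomposition always satisfies the degree-two interior condition---to be where all the care is needed, while everything else is bookkeeping about leaf counts.
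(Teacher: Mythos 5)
Your proposal takes essentially the same route as the paper: the paper disposes of the maximal case with exactly your one-line observation (a tree with two non-leaf vertices admits a leaf-increasing shift, so the star is the unique maximal element), and for the minimal case it gives no argument at all, simply carrying the citation to Csikv\'{a}ri's Corollary 2.5 in the statement, just as you ultimately do. One correction within your maximal-case argument: the diametral-pair variant is wrong (interior vertices of a geodesic between the two farthest non-leaves can have degree $\geq 3$, e.g.\ in a caterpillar, violating the degree-two condition of Definition~\ref{treeshift}), so you should keep only the adjacent-non-leaves version, which is sound because two non-leaves at minimum distance must be adjacent---any interior vertex of the path joining them has degree $\geq 2$ in $T$ and would be a closer non-leaf.
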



\section{Proof of Conjecture~\ref{conjecture}}\label{proof}

When the tree $T'$ covers $T$ in the Csikv\'{a}ri poset, we will prove the inequality $f(T')\leq f(T)$ by factoring $f(T)-f(T')$ into three polynomials with no negative coefficients (Proposition~\ref{mainresult}). To state this result, we need to define the following function. 
\begin{df} 
For a graph $G$ and a vertex $v$ of $G$, let $\varepsilon(v)$ be the edge of $\text{Cone}(G)$ connecting the vertex $v$ and the apex $r$ of the cone. Note that $\varepsilon(v)$ is a bridge if and only if $v$ is not adjacent to any other vertex of $G$. Define the function
\begin{equation*}
g_v(G):=
\begin{cases}
\T_{\text{Cone}(G)\backslash \varepsilon(v)}(1,y),
&\text{if }v \text{ is adjacent to some other vertex of }G, \\
0,
&\text{otherwise}.
\end{cases}    
\end{equation*}
\end{df}
\begin{prop}\label{mainresult}
Using the notations of Definition~\ref{treeshift}, we have for the trees $T$ and $T'$,\[f(T)-f(T')=y\cdot g_{v_1}(H_1)\cdot g_{v_k}(H_2)\cdot g_{v_1}(P_k).\] Consequently, $f(T')\leq f(T)$.
\end{prop}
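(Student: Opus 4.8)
The plan is to understand how the two quantities $f$ and $g_v$ transform under the $1$-sum $G_1 : G_2$, repackage this as a transfer matrix, and then recognize $f(T)-f(T')$ as a single commutator.

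The key structural fact is that coning turns a $1$-sum into a gluing of cones along one edge: $\text{Cone}(G_1 : G_2)$ is built from $\text{Cone}(G_1)$ and $\text{Cone}(G_2)$ by identifying their apexes together with the two copies of $v$, keeping a single copy of $\varepsilon(v)$. Running deletion--contraction on $\varepsilon(v)$ --- whose contraction is a genuine $1$-sum, over which $\T$ is multiplicative, and whose deletion is the ``$2$-sum'' term, evaluated through the connected-spanning-subgraph expansion $\T_G(1,y)=\sum_{S\ \text{conn.\ spanning}}(y-1)^{|E(S)|-|V(G)|+1}$ (this is precisely the bookkeeping that Csikv\'ari's General Lemma automates) --- I expect the two closed forms
\[
f(G_1 : G_2) = f(G_1)\,f(G_2) - y\,g_v(G_1)\,g_v(G_2),
\]
\[
g_v(G_1 : G_2) = f(G_1)\,g_v(G_2) + g_v(G_1)\,f(G_2) - (y+1)\,g_v(G_1)\,g_v(G_2).
\]
A companion computation handles lengthening a path: if $G'$ is $G$ with a new coned pendant edge $uu'$, then $f(G')=(y+2)f(G)-y\,g_u(G)$ and $g_{u'}(G')=f(G)$, the latter because deleting $\varepsilon(u')$ leaves $u'$ attached by a bridge.

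Encoding the state at the active vertex $u$ as $\big(f(G),g_u(G)\big)^{\mathsf T}$, these identities say that attaching a rooted graph $H$ acts by $M(H)=\left(\begin{smallmatrix} f(H) & -y\,g(H)\\ g(H) & f(H)-(y+1)g(H)\end{smallmatrix}\right)$, where $g(H)$ is $g$ at the root of $H$, while lengthening the path acts by the constant matrix $\Pi=\left(\begin{smallmatrix} y+2 & -y\\ 1 & 0\end{smallmatrix}\right)$. Building $T$ from $H_2$ by traversing the $k-1$ path edges and then attaching $H_1$ gives $f(T)=e_1^{\mathsf T}M(H_1)\,\Pi^{k-1}R$, whereas building $T'$ by first attaching $H_1$ to $H_2$ at the shared vertex and then growing the pendant path gives $f(T')=e_1^{\mathsf T}\Pi^{k-1}M(H_1)\,R$, where $R=\big(f(H_2),g_{v_k}(H_2)\big)^{\mathsf T}$ and $e_1=(1,0)^{\mathsf T}$. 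Hence
\[
f(T)-f(T')=e_1^{\mathsf T}\big[\,M(H_1),\,\Pi^{k-1}\,\big]R ,
\]
and the whole problem is reduced to evaluating one commutator.

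A direct $2\times 2$ multiplication gives $[M(H_1),\Pi]=g_{v_1}(H_1)\,J$ with $J=\left(\begin{smallmatrix}0&y\\1&0\end{smallmatrix}\right)$ --- crucially, only $g_{v_1}(H_1)$ survives. Since $\det\Pi=y$, one has $J\Pi J^{-1}=\mathrm{adj}\,\Pi=y\,\Pi^{-1}$, i.e. the braid relation $J\,\Pi^{m}=y^{m}\Pi^{-m}J$; expanding $[M(H_1),\Pi^{k-1}]=g_{v_1}(H_1)\sum_{j=0}^{k-2}\Pi^{j}J\,\Pi^{k-2-j}$ and pushing every $J$ to the left collapses the sum. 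Combined with $e_1^{\mathsf T}J=y\,e_2^{\mathsf T}$ and the Cayley--Hamilton recursion $[\Pi^{n}]_{11}=(y+2)[\Pi^{n-1}]_{11}-y[\Pi^{n-2}]_{11}$, a two-line induction yields $e_1^{\mathsf T}[M(H_1),\Pi^{k-1}]=y\,g_{v_1}(H_1)\,[\Pi^{k-1}]_{21}\,e_2^{\mathsf T}$. As $[\Pi^{k-1}]_{21}=g_{v_1}(P_k)$ (the path grown from a single vertex) and $e_2^{\mathsf T}R=g_{v_k}(H_2)$, this is exactly $y\,g_{v_1}(H_1)\,g_{v_k}(H_2)\,g_{v_1}(P_k)$; nonnegativity of the three factors is immediate, each being a value of $\T(1,y)$. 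I expect the main obstacle to be the first step --- pinning down the two bilinear gluing identities, i.e. correctly evaluating the $2$-sum term at $x=1$; once $M(H_1)$ and $\Pi$ are in hand, the rest is the clean observation that $T$ versus $T'$ measures the failure of ``attach $H_1$'' to commute with ``traverse the path,'' a commutator that turns out to have rank one.
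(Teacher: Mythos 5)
Your proposal is correct, and it reaches the paper's exact factorization by a genuinely different route in its second half. Your first gluing identity $f(G_1:G_2)=f(G_1)f(G_2)-y\,g_v(G_1)g_v(G_2)$ is precisely the paper's Lemma~\ref{keyformula}, which the paper proves by induction on the edges of $G_1$ using the auxiliary function $h_v(G)=\T_{\mathrm{Cone}(G)/\varepsilon(v)}(1,y)$; you only sketch it (``I expect the two closed forms''), and indeed you flag this $2$-sum bookkeeping as the main obstacle. Note that your companion identity for $g_v$ — which the paper never needs to state — follows in one line from that lemma: since $f=g_v+h_v$ and $h_v(G_1:G_2)=h_v(G_1)h_v(G_2)$ (the paper's Equations~(\ref{eq1}) and~(\ref{eq2})), one gets $g_v(G_1:G_2)=f(G_1:G_2)-h_v(G_1)h_v(G_2)=f(G_1)g_v(G_2)+g_v(G_1)f(G_2)-(y+1)g_v(G_1)g_v(G_2)$, exactly your formula; this would firm up the ``expected'' step without any spanning-subgraph expansion. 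Where you diverge is what comes next: the paper feeds Lemma~\ref{keyformula} into Csikv\'ari's General Lemma as a black box, with $c_1=1$, $c_2=0$, $c_3=-y$, checks $q_{v_1}(P_2)=-y\neq 0$, and evaluates the resulting expression using $g_{v_1}(P_3)-g_{v_2}(P_3)=-1$; you instead re-derive the conclusion ab initio via a transfer matrix, and your computations check out: $[M(H_1),\Pi]=g_{v_1}(H_1)J$ with $J^2=yI$, the identity $e_1^{\mathsf T}[M(H_1),\Pi^{k-1}]=y\,g_{v_1}(H_1)[\Pi^{k-1}]_{21}\,e_2^{\mathsf T}$ (provable by the short induction you indicate, e.g.\ via $[\Pi^{m+1}]_{21}=[\Pi^{m}]_{11}$ and $[\Pi^{m}]_{12}=-y[\Pi^{m}]_{21}$), and $[\Pi^{k-1}]_{21}=g_{v_1}(P_k)$ all hold, and they reproduce $f(T)-f(T')=y\,g_{v_1}(H_1)g_{v_k}(H_2)g_{v_1}(P_k)$, consistent with the paper's Example~\ref{example g}. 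One phrase is loose: pushing every $J$ left does not literally collapse $\sum_j \Pi^{j}J\Pi^{k-2-j}$ to one term (it becomes $J\sum_j y^{j}\Pi^{k-2-2j}$); the collapse only happens after hitting with $e_1^{\mathsf T}$, which is what your induction actually establishes — so state it that way. In effect you have reproved the special case of the General Lemma that the paper cites: the paper's route is shorter and needs only one gluing identity (since $c_2=0$), while yours is self-contained, explains structurally why the difference factors (a rank-one commutator measuring the failure of ``attach $H_1$'' to commute with ``traverse the path''), at the cost of the second identity and some matrix algebra.
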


\begin{cor}
Conjecture~\ref{conjecture} holds.     
\end{cor}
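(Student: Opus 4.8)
The plan is to prove the stated \emph{equality} for a single generalized tree shift; the inequality $f(T')\le f(T)$ and then the Corollary are formal consequences. The geometric starting point is that coning converts the vertex-identification ``$:$'' on trees into gluing of cones along an apex edge: for rooted graphs, $\text{Cone}(G_1:G_2)$ is precisely $\text{Cone}(G_1)$ and $\text{Cone}(G_2)$ glued along the edge $\varepsilon(v)$ joining the apex $r$ to the shared vertex $v$ (together with its two endpoints $r,v$). Consequently $\text{Cone}(T)$ and $\text{Cone}(T')$ are assembled from the \emph{same} three pieces $\text{Cone}(H_1)$, $\text{Cone}(P_k)$, $\text{Cone}(H_2)$: in $T$ they are chained along the two distinct apex edges $\varepsilon(v_1)$ and $\varepsilon(v_k)$, whereas in $T'$ all three share the single apex edge at the merged vertex $w=v_1=v_k$. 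Thus the entire difference $f(T)-f(T')$ is localized at the path $P_k$ and its two attaching edges, which is what makes a clean factorization plausible.

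The main tool is a deletion–contraction bookkeeping at apex edges together with Csikv\'ari's General Lemma. First I would record the relation $f(G)=g_v(G)+\T_{\text{Cone}(G)/\varepsilon(v)}(1,y)$, valid in all cases once one keeps the convention $g_v=0$ for isolated $v$ (then $\varepsilon(v)$ is a bridge and only the contraction term survives). Writing $c_v(G):=\T_{\text{Cone}(G)/\varepsilon(v)}(1,y)$, the crucial multiplicativity is $c_v(G_1:G_2)=c_v(G_1)\,c_v(G_2)$, because contracting $\varepsilon(v)$ merges $r$ and $v$ into one vertex at which the two cones meet in a $1$-sum, and the Tutte polynomial is multiplicative over $1$-sums. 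The harder gluing rule is for $f$ and $g_v$ themselves, where deleting $\varepsilon(v)$ leaves the two cones sharing the two terminals $r,v$; here one needs the two-terminal (series/parallel) splitting of the Tutte polynomial, organized so that the relevant rank and nullity exponents stay local. This is exactly the situation Csikv\'ari's General Lemma is built for, so I would verify that the triple $(f,g_v,c_v)$ supplies the gluing/transfer data the Lemma requires, after which it computes $f$ of any cone assembled from pieces glued along apex edges.

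With these rules in hand the execution is a transfer computation along $P_k$. I would expand $f(T)$ and $f(T')$ as the same product of piece-data, reading the path as a transfer operator with boundary vectors coming from $H_1$ at $v_1$ and $H_2$ at $v_k$; because I am manipulating equalities rather than inequalities, the bulk of the two expansions agree and cancel. What remains is a single product, which I would identify with $y\cdot g_{v_1}(H_1)\cdot g_{v_k}(H_2)\cdot g_{v_1}(P_k)$: the factors $g_{v_1}(H_1)$ and $g_{v_k}(H_2)$ record the two boundary contributions from $H_1$ and $H_2$, while $g_{v_1}(P_k)$ together with the extra factor $y$ come from the transfer data of the path and the apex edge at its free end. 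The degenerate cases where $v_1$ (or $v_k$) is isolated in $H_1$ (or $H_2$)---exactly the cases where the shift is trivial and $T\cong T'$---are absorbed automatically by the convention $g_v=0$, making both sides vanish.

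Finally, the inequality and the Corollary follow at once. Each factor on the right has nonnegative coefficients ($y$ trivially, and every $g_v$ is either $0$ or a genuine evaluation $\T_{\cdot}(1,y)$, whose coefficients are nonnegative), so $f(T)-f(T')\ge 0$, i.e.\ $f(T')\le f(T)$; and if $T<T'$ in the Csikv\'ari poset then by definition $T'$ is reached from $T$ by a sequence of generalized tree shifts, so chaining the single-shift inequality and using transitivity of $\le$ on $\Z[y]$ yields $f(T')\le f(T)$. I expect the main obstacle to be the two-terminal splitting formula for $\T_{\text{Cone}(\cdot)}(1,y)$ at an apex edge with correct rank/nullity accounting, and then checking that the cancellation collapses to exactly one product carrying the predicted factor $y$ and the predicted path term $g_{v_1}(P_k)$, rather than to a sum of several products. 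The conceptual reason the collapse occurs is that the discrepancy between the chained and the bundled attachments at the two apex edges is of rank one, which is precisely the feature the General Lemma exploits.
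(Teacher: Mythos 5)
Your plan is, at its core, the same as the paper's proof: decompose $f(G)=g_v(G)+c_v(G)$ where $c_v(G)=\T_{\text{Cone}(G)/\varepsilon(v)}(1,y)$ (the paper's auxiliary $h_v$), use multiplicativity of $c_v$ over $1$-sums, establish a bilinear gluing rule for $f$ under $G_1:G_2$, feed the pair $(f,g_v)$ into Csikv\'{a}ri's General Lemma, and conclude via nonnegativity of Tutte coefficients plus transitivity along covers --- exactly the paper's architecture. The one genuine divergence is how you would prove the gluing rule, which is the crux (the paper's Lemma~\ref{keyformula}, asserting $f(G_1:G_2)=f(G_1)f(G_2)-y\,g_v(G_1)g_v(G_2)$, i.e.\ $c_1=1$, $c_2=0$, $c_3=-y$): the paper gives a self-contained induction on the edges of $G_1$, doing deletion--contraction at an edge of $G_1$ incident to $v$ and then at $\varepsilon(v)$, whereas you propose to import the classical two-terminal splitting, observing that $\text{Cone}(G_1:G_2)$ is the parallel connection of the two cones along $\varepsilon(v)$. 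That route is viable: writing $T_P(1,y)=T_{P\setminus \varepsilon(v)}(1,y)+T_{P/\varepsilon(v)}(1,y)$ and using Brylawski's parallel-connection/$2$-sum formulas (whose denominator $xy-x-y$ specializes to $-1$ at $x=1$, so everything stays polynomial), the splitting does collapse to the identity above; what it buys is a citation and a conceptual explanation in place of the induction, at the cost of the rank/nullity bookkeeping you yourself flag as the main obstacle, and of handling separately the bridge case where $\varepsilon(v)$ is a coloop (your convention $g_v=0$ covers it, as in the paper). Two smaller points: your third paragraph's ``transfer computation along $P_k$'' is redundant, since once condition (1) of the General Lemma is verified the Lemma itself outputs the factorization, the factor $y$ arising as $\frac{g_{v_1}(P_3)-g_{v_2}(P_3)}{(q_{v_1}(P_2))^2}\cdot(-y)^3=\frac{(y+2)-(y+3)}{y^2}\cdot(-y^3)=y$; and you never check the Lemma's nondegeneracy condition (2), though it is immediate ($q_{v_1}(P_2)=-y\,g_{v_1}(P_2)=-y\neq 0$). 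Note finally that your predicted answer $y\,g_{v_1}(H_1)\,g_{v_k}(H_2)\,g_{v_1}(P_k)$ tacitly presupposes $c_2=0$, so that $q_v$ is proportional to $g_v$; this is not automatic from the general shape of a two-terminal splitting and must come out of the computation, as it does at $x=1$.
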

\begin{proof}
This is a direct consequence of Proposition~\ref{mainresult} and the definition of Csikv\'{a}ri poset. 
\end{proof}

Before we present the technical proof of Proposition~\ref{mainresult}, we introduce a notation and give an example first. 

In this section, any path on $k$ vertices will be denoted by $P_k$ where the vertices are labeled by $v_1,\cdots, v_k$ in order. Sometimes a formula or a sentence will involve two different paths where we use the same label for two different vertices; e.g. the paths $P_2$ and $P_3$ both have a vertex labeled by $v_1$. This abuse of notation should not cause any confusion in the context. 

\begin{ex}\label{example g}
This example demonstrates Proposition~\ref{mainresult}. By direct computations, we have \[g_{v_1}(P_2)=\T_{P_2}(1,y)=1,\]
\[g_{v_2}(P_3)=\T_{4\text{-cycle}}(1,y)=y+3,\text{ and}\]
\[g_{v_1}(P_3)=\T_{3\text{-cycle}}(1,y)=y+2.\]
Then we consider the data in Figure~\ref{Hasse} and Table~\ref{table}. The tree $T_2$ consists of a subtree $H_1=P_3$ with the distinguished vertex $v_1$, a subtree $H_2=P_3$ with the distinguished vertex $v_2$, and the path $P_3$ connecting the two distinguished vertices. The generalized tree shift transforms the tree $T_2$ into $T_8$. Then one can check that $f(T_2)-f(T_8)=y^4+7y^3+16y^2+12y=yg_{v_1}(P_3)g_{v_2}(P_3)g_{v_1}(P_3)$.
\end{ex}

Our main tool to prove Proposition~\ref{mainresult} is what Csikv\'{a}ri called ``General Lemma''. 
\begin{lem}\label{general lemma}(General Lemma)\footnote{By the proof of the General Lemma in \cite{Csikvari2}, if we only assume that the condition (1) holds for any two trees $G_1$ and $G_2$, then the conclusion of the lemma still holds.}\cite[Theorem 5.1]{Csikvari2} Let $f(G)$ be a graph polynomial in $y$. Assume there exists a graph polynomial $g_v(G)$ in $y$ whose inputs are a graph $G$ and a vertex $v$ of $G$ such that the following two conditions hold.
\begin{enumerate}
\item For any two graphs $G_1$ and $G_2$, we have
\[
f(G_1:G_2)=c_1f(G_1)f(G_2)+c_2f(G_1)g_{v}(G_2)+c_2f(G_2)g_{v}(G_1)+c_3g_{v}(G_1)g_{v}(G_2),
\]
where $c_1,c_2,c_3$ are rational functions of $y$ and $v$ is the identified vertex in $G_1:G_2$.

\item Denote \[q_v(G):=c_2f(G)+c_3g_{v}(G).\] We have that $q_{v_1}(P_2)$ is not a zero polynomial.

\end{enumerate}

Then the conclusion is 
\[f(T)-f(T')=\frac{g_{v_1}(P_3)-g_{v_2}(P_3)}{(q_{v_1}(P_2))^2}q_{v_1}(P_k)q_{v_1}(H_1)q_{v_k}(H_2),\]where the trees $T$ and $T'$ are as in Definition~\ref{treeshift}.
\end{lem}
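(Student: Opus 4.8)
The plan is to expand both $f(T)$ and $f(T')$ by repeatedly applying condition (1), organized so that the branches $H_1$, $H_2$ and the path $P_k$ are peeled off one at a time. First I would rewrite condition (1) in the \emph{one-sided form}
\[f(G_1:G_2)=f(G_1)\,p_v(G_2)+g_v(G_1)\,q_v(G_2),\qquad p_v(G):=c_1f(G)+c_2g_v(G),\]
which simply says that $f(G_1:G_2)$ is bilinear in the pairs $(f(G_i),g_v(G_i))$; since the original expression is symmetric in $G_1,G_2$, either factor may be peeled off. In this language the state that a rooted branch contributes when glued in is the $2$-vector $(f,g_v)$ at its root.

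The obstacle is that condition (1) controls only $f$ under $1$-sums, while expanding $f(T)$ forces me to evaluate $g$ at vertices where several pieces already meet (for instance $g_{v_k}(H_1:P_k)$), and the hypotheses give no composition law for such values. The crux of the proof is to circumvent this by \emph{inverting} condition (1): taking $G_2=P_2$ glued at an endpoint gives $f(G:P_2)=p_{v_1}(P_2)\,f(G)+q_{v_1}(P_2)\,g_v(G)$, where $G:P_2$ is the graph obtained by attaching one pendant edge at $v$. Because $q_{v_1}(P_2)$ is a nonzero polynomial by condition (2), I may solve
\[g_v(G)=\frac{f(G:P_2)-p_{v_1}(P_2)\,f(G)}{q_{v_1}(P_2)}\]
for \emph{every} rooted graph $(G,v)$. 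Thus all values of $g$, including those at interior or gluing vertices, are recovered from $f(G)$ and $f(G:P_2)$, both of which are computable by condition (1); the computation therefore closes over the data $f,g$ of $H_1$, $H_2$ and of short paths.

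With this inversion in hand I would set $\Delta(A,B):=f(T)-f(T')$ for the tree shift along $P_k$ with branches $A$ at $v_1$ and $B$ at $v_k$, and prove a branch-peeling identity. Writing $S_B:=P_k:B$ for the path carrying $B$ at $v_k$, the trees $T$ and $T'$ are obtained by attaching $A$ at the two ends $v_1$ and $v_k$ of $S_B$; peeling off $A$ with the one-sided form makes the identical $f(S_B)$-terms cancel, leaving $\Delta(A,B)=q_{v_1}(H_1)\big[g_{v_1}(S_B)-g_{v_k}(S_B)\big]$ when $A=H_1$. Feeding the inversion formula into the bracket rewrites it as $\tfrac{1}{q_{v_1}(P_2)}\Delta(P_2,B)$, because attaching a pendant edge at $v_1$, respectively $v_k$, of $S_B$ produces exactly the pair $(T,T')$ for the branches $(P_2,B)$. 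Iterating this, and using $\Delta(A,B)=\Delta(B,A)$ (valid since swapping the two ends leaves $T$ and $T'$ unchanged up to isomorphism) to peel off $H_2$ as well, yields
\[f(T)-f(T')=\frac{q_{v_1}(H_1)\,q_{v_k}(H_2)}{(q_{v_1}(P_2))^2}\,\Delta(P_2,P_2).\]

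It remains to evaluate $\Delta(P_2,P_2)$, the only genuinely path-theoretic input. Here $T=P_{k+2}$ while $T'$ is the broom obtained by attaching two extra leaves at one end of $P_k$, and I would realize both as $P_k$ glued at its endpoint to a copy of $P_3$: rooted at an \emph{endpoint} of $P_3$ this reproduces $P_{k+2}$, and rooted at the \emph{center} it reproduces the broom. Subtracting the two one-sided expansions, the $f(P_3)$-parts cancel and only the difference $g_{v_1}(P_3)-g_{v_2}(P_3)$ survives, giving $\Delta(P_2,P_2)=\big[g_{v_1}(P_3)-g_{v_2}(P_3)\big]\,q_{v_1}(P_k)$ and hence the stated formula. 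I expect the main difficulty to be the conceptual step in the second paragraph—recognizing that condition (2) is precisely what allows $g$ to be expressed through $f$; once that inversion is available, the remaining work is systematic peeling together with two short bilinear cancellations.
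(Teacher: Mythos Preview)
The paper does not supply its own proof of the General Lemma; the statement is quoted from \cite[Theorem~5.1]{Csikvari2} and used as a black box. Your argument is correct and is essentially Csikv\'{a}ri's original proof: rewrite condition~(1) in the one-sided bilinear form $f(G_1{:}G_2)=f(G_1)p_v(G_2)+g_v(G_1)q_v(G_2)$, peel off the first branch to get $\Delta(A,B)=q_{v_1}(A)\bigl[g_{v_1}(S_B)-g_{v_k}(S_B)\bigr]$, use condition~(2) to invert the $P_2$-gluing and rewrite the bracket as $\Delta(P_2,B)/q_{v_1}(P_2)$, iterate via the symmetry $\Delta(A,B)=\Delta(B,A)$, and evaluate the base case $\Delta(P_2,P_2)$ by recognizing both configurations as $P_k{:}P_3$ at the endpoint and center rootings of $P_3$. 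All the bookkeeping, including the isomorphisms needed to identify the broom at $v_1$ with the broom at $v_k$ in the last step, checks out.
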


Although in our paper the function $f$ has been defined, the function $f$  in the lemma could be any graph polynomial. Csikv\'{a}ri called it the General Lemma because it was used to prove ``$T<T'\Rightarrow f(T)\leq f(T')$'' for several graph polynomials $f$ in \cite{Csikvari2}. One difficulty of applying the General Lemma is to find the function $g_v$, which we have given for our function $f$.



We will prove Proposition~\ref{mainresult} by checking that our functions $f$ and $g_v$ satisfy the conditions in the General Lemma. We first introduce an auxiliary function 
\[h_v(G):=\T_{\text{Cone}(G)/ \varepsilon(v)}(1,y).\]
By the contraction-deletion relation of Tutte polynomials, we get
\begin{equation}\label{eq1}
f(G)=g_v(G)+h_v(G). 
\end{equation}
It is not hard to show that $\T_{G_1:G_2}(x,y)=\T_{G_1}(x,y)\T_{G_2}(x,y)$. Hence we have
\begin{equation}\label{eq2}
h_v(G_1:G_2)=h_v(G_1)h_v(G_2),
\end{equation}
where $v$ is the identified vertex in $G_1:G_2$.

\begin{lem}\label{keyformula}
Let $v$ be the identified vertex in $G_1:G_2$. Then
\[f(G_1:G_2)=f(G_1)f(G_2)-yg_v(G_1)g_v(G_2).\]
\end{lem}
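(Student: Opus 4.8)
The plan is to reduce the claimed identity to a statement about $g_v$ alone, and then to prove that statement by expanding the relevant Tutte polynomial as a generating function over connected spanning subgraphs.

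First I would invoke the two identities already in hand: equation \eqref{eq1}, $f=g_v+h_v$, applied to $G_1:G_2$, and the multiplicativity \eqref{eq2}, $h_v(G_1:G_2)=h_v(G_1)h_v(G_2)$. Abbreviating $g_i:=g_v(G_i)$ and $h_i:=h_v(G_i)$ and substituting $f(G_i)=g_i+h_i$ on the right-hand side, the goal $f(G_1:G_2)=f(G_1)f(G_2)-yg_1g_2$ becomes, after cancelling the common term $h_1h_2=h_v(G_1:G_2)$, the equivalent claim
\[g_v(G_1:G_2)=(1-y)g_1g_2+g_1h_2+h_1g_2.\]
Thus it suffices to compute $g_v(G_1:G_2)=\T_{\text{Cone}(G_1:G_2)\setminus\varepsilon(v)}(1,y)$.

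For this I would use the specialization $\T_D(1,y)=\sum_{A}(y-1)^{|A|-(|V(D)|-1)}$, the sum ranging over edge subsets $A$ for which $(V(D),A)$ is connected and spanning, applied to $D=\text{Cone}(G_1:G_2)\setminus\varepsilon(v)$. Deleting $\varepsilon(v)$ leaves the apex $r$ and the identified vertex $v$ as the only two vertices shared by the ``$G_1$-side'' $\text{Cone}(G_1)\setminus\varepsilon(v)$ and the ``$G_2$-side'' $\text{Cone}(G_2)\setminus\varepsilon(v)$, which are otherwise vertex-disjoint; every edge lies on exactly one side, so each $A$ restricts to $A_1,A_2$ on the two sides. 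The key local observation is that, because the sides overlap only in $\{r,v\}$, the union is connected and spanning precisely when each $A_i$ covers all of its side's vertices with $r,v$ falling into at most two classes, and at least one side joins $r$ to $v$. This admits exactly the three combinations (both sides join $r$ to $v$), (only side $1$ does), (only side $2$ does), and forbids the case in which neither side links them. Recording the weight $(y-1)^{|A_i|-\mathrm{rank}}$ on each side, and noting that a connected spanning subgraph of a side (where $r\sim v$) uses one more edge than a two-component one (where $r$ and $v$ are separated), the three cases assemble into
\[g_v(G_1:G_2)=(y-1)p_1p_2+p_1s_2+s_1p_2,\]
where $p_i$ weights the side-$i$ subgraphs joining all of that side's vertices and $s_i$ weights those splitting into an $r$-part and a $v$-part.

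Finally I would identify these local sums with $g_i,h_i$. The side-$i$ graph is exactly $\text{Cone}(G_i)\setminus\varepsilon(v)$, so its connected spanning subgraphs give $p_i=g_i$. Contracting $\varepsilon(v)$ instead identifies $r$ with $v$, so connected spanning subgraphs of $\text{Cone}(G_i)/\varepsilon(v)$ correspond to side-$i$ subgraphs that become connected after merging $r$ and $v$, i.e. the ``$p$''- and ``$s$''-type configurations together; comparing the rank shift yields $h_i=(y-1)p_i+s_i$, hence $s_i=h_i-(y-1)g_i$. Substituting $p_i=g_i$ and $s_i=h_i-(y-1)g_i$ into the display above collapses the $(y-1)g_1g_2$ contributions to $(1-y)g_1g_2$ and gives exactly $g_v(G_1:G_2)=(1-y)g_1g_2+g_1h_2+h_1g_2$, completing the reduction. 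I expect the main obstacle to be the connectivity classification: rigorously verifying that a two-vertex overlap forces precisely the $p/s$ dichotomy and forbids only the $(s,s)$ case, together with the bookkeeping of the exponent of $(y-1)$ (equivalently, the rank of each side in each state), which is the source of the single extra factor of $(y-1)$ and hence of the sign in $(1-y)$. I would also treat separately the degenerate case in which $v$ is isolated in some $G_i$, so that $\varepsilon(v)$ is a bridge and $g_i=0$ by convention, checking that then no ``$p$''-type configuration exists on that side, whence $p_i=0=g_i$ and the formula persists.
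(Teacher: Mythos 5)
Your proposal is correct, but it proves the lemma by a genuinely different route than the paper. The paper argues by induction on the number of edges of $G_1$: applying deletion--contraction at an edge $e$ of $G_1$ incident to $v$, both in $\text{Cone}(G_1)$ and in $\text{Cone}(G_1)\setminus\varepsilon(v)$, yields the recursions $f(G_1)=f(G_1\setminus e)+f(G_1/e)+yh_v(G_1/e)$ and $g_v(G_1)=g_v(G_1/e)+h_v(G_1/e)+g_v(G_1\setminus e)$, which combine with \eqref{eq1}, \eqref{eq2} and the induction hypothesis; its base case (no edges at $v$, so $g_v(G_1)=0$ and $f$ multiplies) is exactly your degenerate case. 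You perform the same preliminary reduction via \eqref{eq1} and \eqref{eq2} to the claim $g_v(G_1:G_2)=(1-y)g_1g_2+g_1h_2+h_1g_2$, but then prove it non-inductively by expanding $\T_D(1,y)=\sum_A (y-1)^{|A|-(|V(D)|-1)}$ over connected spanning subgraphs of $D=\text{Cone}(G_1:G_2)\setminus\varepsilon(v)$ and classifying configurations at the two-vertex cut $\{r,v\}$ into your $p$/$s$ types; the bookkeeping checks out: every side component must meet $\{r,v\}$ (else it stays disconnected globally), the $(s,s)$ case is exactly the forbidden one, the exponent shift gives $(y-1)p_1p_2+p_1s_2+s_1p_2$, and the identifications $p_i=g_i$ and $h_i=(y-1)p_i+s_i$ (from contracting $\varepsilon(v)$) collapse this to the target, including in the degenerate cases $g_i=0$ that you flag. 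The paper's induction buys brevity and never leaves the recursive definition of the Tutte polynomial given in its Section 2, while your argument buys a structural explanation of the correction term $-yg_1g_2$ as a transfer across the cut $\{r,v\}$, and it visibly generalizes to $\T(1,y)$ of any gluing of two graphs along a pair of vertices. When writing it up, take care that the spanning-subgraph expansion is stated only for connected $D$ (so either verify connectivity of $\text{Cone}(G_1:G_2)\setminus\varepsilon(v)$ when $v$ has a neighbor on at least one side, or invoke the convention $g_v=0$ when it does not), and note that the expansion is valid for multigraphs, which matters since the paper explicitly allows loops and parallel edges.
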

\begin{proof}
Observe that if the vertex $v$ is not adjacent to any other vertex of $G_1$, then we have $g_v(G_1)=0$ and $f(G_1:G_2)=f(G_1)f(G_2)$. Thus the desired equality holds. 

Now we use induction on the number edges in $G_1$ to prove \[f(G_1)f(G_2)-f(G_1:G_2)=yg_v(G_1)g_v(G_2).\]
The base case is that $G_1$ has no edge, which is covered by the observation above. For the inductive step, we may assume that there exists an edge $e$ of $G_1$ connecting $v$ and another vertex of $G_1$ (otherwise the case is covered by the observation). Let $\varepsilon(v)$ be the edge of $\text{Cone}(G_1)$ connecting the vertex $v$ and the apex of the cone.

Then by applying the contraction-deletion relation to the edge $e$ of $\text{Cone}(G_1)$, we get \[f(G_1)=f(G_1\setminus e)+\T_{\text{Cone}(G_1)/e}(1,y).\]
By further applying the contraction-deletion relation to the edge $\varepsilon(v)$ of $\text{Cone}(G_1)/e$ (which is one of the multiple edges produced by the contraction), we get
\begin{equation}\label{eq3}
f(G_1)=f(G_1\setminus e)+f(G_1/e)+yh_v(G_1/e). 
\end{equation}

Similarly, we apply the contraction-deletion relation to the edge $e$ in $\text{Cone}(G_1)\backslash \varepsilon(v)$. Note that $e$ is a bridge of $\text{Cone}(G_1)\backslash \varepsilon(v)$ if and only if $v$ is not adjacent to any other vertex in the graph $G_1\setminus e$, and the latter condition implies $g_v(G_1\setminus e)=0$. Thus we have\[g_v(G_1) = f(G_1/e)+g_v(G_1\setminus e),\]whether $e$ is a bridge of $\text{Cone}(G_1)\backslash \varepsilon(v)$ or not. Then by Equation~(\ref{eq1}), we get
\begin{equation}\label{eq4}
g_v(G_1)  = g_v(G_1/e)+h_v(G_1/e)+g_v(G_1\setminus e).
\end{equation}

The following computation finishes the proof, where IH means the induction hypothesis.
\begin{equation*}
\begin{split}
  & f(G_1)f(G_2)-f(G_1:G_2)   \\
\overset{(\ref{eq3})}{=}\ & (f(G_1\setminus e)+f(G_1/e)+yh_v(G_1/e))\cdot f(G_2)-f(G_1\setminus e:G_2)-f(G_1/e:G_2)-yh_v(G_1/e:G_2)\\
\overset{\text{IH}}{=}\ & yg_v(G_1\setminus e)g_v(G_2)+yg_v(G_1/e)g_v(G_2)+yh_v(G_1/e)f(G_2)-yh_v(G_1/e:G_2)\\
\overset{(\ref{eq2})}{=}\ & yg_v(G_1\setminus e)g_v(G_2)+yg_v(G_1/e)g_v(G_2)+yh_v(G_1/e)f(G_2)-yh_v(G_1/e)h_v(G_2)\\
\overset{(\ref{eq1})}{=}\ & yg_v(G_1\setminus e)g_v(G_2)+yg_v(G_1/e)g_v(G_2)+yh_v(G_1/e)g_v(G_2)\\
\overset{(\ref{eq4})}{=}\ & yg_v(G_1)g_v(G_2).
\end{split}
\end{equation*}
\end{proof}

\begin{proof}[Proof of Proposition~\ref{mainresult}]
Consider applying the General Lemma with $c_1=1, c_2=0, c_3=-y$. 

By Lemma~\ref{keyformula}, the first condition in the General Lemma holds. For the second condition, we have \[q_v(G)=-yg_{v}(G),\]and hence $q_{v_1}(P_2)=-yg_{v_1}(P_2)=-y\neq 0$ by Example~\ref{example g}. Thus the General Lemma can be applied in our setting, and a direct computation gives the desired equality. The inequality follows from the fact that the Tutte polynomials and hence $g_v(G)$ do not have negative coefficients. 
\end{proof}




\section*{Acknowledgements}
Thanks to Victor Reiner, Dorian Smith, and P\'{e}ter Csikv\'{a}ri for helpful discussions. Thanks to the referees for useful comments. 

\bibliography{Csikvari}
\bibliographystyle{plain}

\end{document}